\newcommand{\id}{\,\mathrm{d}}
\newcommand{\inj}{\mathrm{inj}}
\renewcommand{\exp}{\mathop{\rm exp}\nolimits}
\providecommand{\Exp}{\mathop{\rm exp}\nolimits}
\newtheorem{theorem}{Theorem}[section]
\newtheorem{lemma}{Lemma}[section]
\newtheorem{corollary}{Corollary}[section]
\theoremstyle{definition}
\newtheorem{definition}{Definition}[section]
\newtheorem{question}{Question}
\title{D'Atri spaces and the total scalar curvature of hemispheres, tubes and cylinders}
\begin{document}
\author{Bal\'azs Csik\'os}
\address{B. Csik\'os, Dept.\ of Geometry, E\"otv\"os Lor\'and University, Budapest}
\email{\href{mailto:balazs.csikos@ttk.elte.hu}{balazs.csikos@ttk.elte.hu}}

\author{Amr Elnashar}
\address{A. Elnashar, Central European University, Budapest}
\email{\href{mailto:Elnashar\_Amr@phd.ceu.edu}{Elnashar\_Amr@phd.ceu.edu}} 
\author{M\'arton Horv\'ath}
\address{M. Horv\'ath, Dept.\ of Geometry, Budapest University of Technology and Economics, Budapest} 
\email{\href{mailto:horvathm@math.bme.hu}{horvathm@math.bme.hu}}
\date{}
\keywords{D'Atri space, total scalar curvature, geodesic sphere, geodesic hemisphere, tubes about curves}
\subjclass[2020]{53C25, 53B20}
\maketitle
\begin{abstract}
B.~Csik\'os and M.~Horv\'ath proved in \cite{tube2} that if a connected Riemannian manifold of dimension at least $4$ is harmonic, then the total scalar curvatures of tubes of small radius about a regular curve depend only on the length of the curve and the radius of the tube, and conversely, if the latter condition holds for cylinders, i.e., for tubes about \emph{geodesic} segments, then the manifold is harmonic. In the present paper, we show that in contrast to the higher dimensional case, a connected 3-dimensional Riemannian manifold has the above mentioned property of tubes if and only if the manifold is a D'Atri space, furthermore, if the space has bounded sectional curvature, then it is enough to require the total scalar curvature condition just for cylinders to imply that the space is D'Atri. This result gives a negative answer to a question posed by L.~Gheysens and L.~Vanhecke. To prove these statements, we give a characterization of D'Atri spaces in terms of the total scalar curvature of geodesic hemispheres in any dimension. 
\end{abstract}

\section{Introduction}
 
 By H.~Hotelling's theorem \cite{Hotelling}, in the $n$-dimensional Euclidean or spherical space, the volume of a solid tube of small radius about a curve depends only on the length of the curve and the radius of the tube. A.~Gray and L.~Vanhecke \cite{Gray-Vanhecke} extended Hotelling's theorem to rank one symmetric spaces. B.~Csik\'os and M.~Horv\'ath \cite{tube1}, \cite{tube2} showed  that Hotelling's theorem is true also in harmonic manifolds, and conversely,  if a Riemannian manifold has the property that the volume of a solid tube of small radius about a \emph{geodesic segment} depends only on the radius of the tube and the length of the geodesic, then the manifold is harmonic. Using the Steiner-type formula of E.~Abbena, A.~Gray, and L.~Vanhecke \cite{Abbena_Gray_Vanhecke}, the above characterization of harmonic spaces provided further similar characterizations of harmonicity in which the condition on the volume of solid tubes is replaced by analogous conditions either on the surface volume, or on the total mean curvature of the tubular hypersurfaces. If the dimension of the manifold is at least $4$, harmonicity can also be characterized by an analogous property of the total scalar curvature of the tubular hypersurfaces. It was left open in \cite{tube2} whether the restriction on the dimension is necessary in the case of total scalar curvature. L.~Gheysens and L.~Vanhecke \cite[p.~193]{Gheysens_Vanhecke} pointed out that the $3$-dimensional case is different. They also posed the question whether vanishing of the total scalar curvature of tubes about curves in a $3$-dimensional Riemannian manifold implies that the manifold is harmonic. Recall that a $3$-dimensional connected Riemannian manifold is harmonic if and only if it is of constant sectional curvature. 
 
 The goal of the present paper is to fill this gap and characterize $3$-dimensional Riemannian manifolds, in which the total scalar curvature of tubular surfaces of small radii about regular curves, or only about geodesic segments depends only on the length of the central curve and the radius of the tube. One of our main theorems, Theorem \ref{thm:main2} says that a 3-dimensional Riemannian manifold has this property for tubes about arbitrary regular curves if and only if the space is a D'Atri space, furthermore, the total scalar curvature of tubes in a 3-dimensional D'Atri space is constant $0$. 
 
 Recall that a Riemannian manifold is said to be a D'Atri space if the local geodesic reflection with respect to an arbitrary point is volume-preserving. Every harmonic manifold is a D'Atri space, but the family of D'Atri spaces is strictly larger than that of harmonic manifolds even in dimension $3$, as shown by the classification of $3$-dimensional D'Atri spaces by  O.~Kowalski \cite{Kowalski}. In particular, by Theorem \ref{thm:main2}, the answer to the above mentioned question of L.~Gheysens and L.~Vanhecke is negative.
  
 It is a natural question to ask whether the D'Atri property of a $3$-dimensional Riemanian manifold is implied by the weaker assumption that the total scalar curvature of tubular surfaces of small radius about geodesic segments depends only on the length of the geodesic and the radius of the tube. In Theorem  \ref{thm:main3}, we show that the answer is yes, if we assume additionally that the manifold is complete and has bounded sectional curvature, for example if it is compact, or homogeneous. However, the following question remains open.
 
 \begin{question}
 Can we omit the assumptions on completeness and boundedness of the sectional curvature in Theorem  \ref{thm:main3}?
 \end{question}    

The proof of Theorems \ref{thm:main2} and \ref{thm:main3} will be based on Theorem \ref{thm:main1}, which provides some characterizations of D'Atri spaces in terms of the scalar curvature functions of geodesic spheres. It claims, for example, that a Riemannian manifold is a D'Atri space if and only if any two geodesic hemispheres lying on the same geodesic sphere have the same total scalar curvature. There is a strong conjecture proposing that all D'Atri spaces are locally homogeneous. If it is true, then it would complete the classification problem of harmonic manifolds by J.~Heber \cite{Heber}. The conjecture is true in dimension $3$ and is supported by a theorem of P.~G\"unther and F.~Pr\"ufer \cite{Gunther_Prufer} claiming that in a D'Atri space, the volume of small balls depends only on the radius, but not on the center. A positive answer to the following question would be a further support to the conjecture and would sharpen Theorem \ref{thm:main1}.

\begin{question}
	Do small geodesic spheres of the same radius have equal total scalar curvature in a connected D'Atri space?
\end{question}   

\section{Notations}

All manifolds in this paper are assumed to be smooth, connected, and of dimension at least $3$.

Let $(M,\langle\,,\rangle)$ be a Riemannian manifold of dimension $n$. The symbols $\nabla$, $R$, $\rho$, and $\tau$ will denote the Levi-Civita connection, the curvature tensor, the Ricci tensor and the scalar curvature function of $M$, respectively. For a two-dimensional linear subspace $\sigma\subset T_pM$, the sectional curvature in the direction of $\sigma$ will be denoted by $K(\sigma)$.  Let $\mathring{T}M\subseteq TM$ be the domain of the exponential map $\exp\colon \mathring{T}M\to M$ of $M$, $\exp_p \colon \mathring{T}_p M \to M$ be the restriction of $\exp$ to $\mathring{T}_p M=T_pM\cap \mathring{T}M$. 
The injectivity radius at $p$ will be denoted by $\inj(p)$.

For $p\in M$ and $r>0$, we shall denote by $B_p(r)\subset T_pM$ 
and $S_p(r)\subset T_pM$ the closed ball and the 
sphere of radius $r$ centered at the origin $\mathbf 0_p\in T_pM$, respectively. 
The unit sphere $S_p(1)$ will be denoted simply by $S_p$. Denote by $SM=\bigcup_{p\in M}S_p\subset TM$ the total space of the unit sphere bundle of the tangent bundle. 

Associated to a non-zero tangent vector $\mathbf v\in T_pM\setminus\{\mathbf 0_p\}$, we shall consider 
the hemisphere
\[
S^{+}(\mathbf v)=\{\mathbf w\in T_pM \mid \langle \mathbf w,\mathbf v\rangle\geq 0,\, \|\mathbf w\|=\|\mathbf v\|\}. 
\]

When $r< \inj(p)$ and $\|\mathbf v\|<\inj(p)$, we can take the exponential images 
\[
\begin{matrix}
\mathcal S_p(r)=\exp(S_p(r)),&\quad&
&\quad&\mathcal S^+(\mathbf v)=\exp(S^+(\mathbf v)).
\end{matrix}
\]
The set 
$\mathcal S_p(r)$ is the
geodesic sphere of radius $r$ centered at $p$. 
Analogously, the set 
$\mathcal S^+(\mathbf v)$ will be called a
\emph{geodesic hemisphere}.

	For a smooth regular curve $\gamma\colon[a,b]\to M$ and $r>0$, set 
	\[
	T_{\bullet}(\gamma,r)=\{\mathbf v\in TM\mid \exists t\in[a,b]\text{ such that }\mathbf 
	v\in T_{\gamma(t)}M, \mathbf v\perp\gamma'(t),\text{ and }\|\mathbf v\|\leq 
	r\},\]
	and 
	\[
	T_{\circ}(\gamma,r)=\{\mathbf v\in TM\mid \exists t\in[a,b]\text{ such that }\mathbf 
	v\in T_{\gamma(t)}M, \mathbf v\perp\gamma'(t),\text{ and }\|\mathbf v\|=r\}.
	\]
	Assume that $r$ is small enough to guarantee that $T_{\bullet}(\gamma,r)\subset \mathring{T}M $ and the	exponential map is an immersion of $T_{\bullet}(\gamma,r)$ into $M$. Then we define the 
	\emph{solid tube of radius $r$ about $\gamma$} by
	\[\mathcal T_{\bullet}(\gamma,r)=\Exp(T_{\bullet}(\gamma,r)),\]
	while the \emph{tubular hypersurface,} or shortly the \emph{tube of radius $r$ about $\gamma$} is defined as 
	\[\mathcal T_{\circ}(\gamma,r)=\Exp(T_{\circ}(\gamma,r)).\]
	Tubular hypersurfaces about geodesic segments will be called \emph{cylinders}. 

Speaking of geodesic spheres and hemispheres, tubes, and cylinders of small radius $r$, ``small'' will always mean that $r$ satisfies the requirements given above in the definition of these geometric shapes.

The scalar curvature of the geodesic sphere $\mathcal S_p(r)$ at the point $\exp_p(\mathbf v)$ for $\mathbf v\in S_p(r)$ will be denoted by $\tau^S(\mathbf v)$.

	The \emph{total scalar curvature} of a compact submanifold, possibly with boundary, of a Riemannian manifold is the integral of the scalar curvature function of the submanifold over the submanifold with respect to the volume measure induced by the Riemannian metric. The definition can be extended in an obvious way to immersed submanifolds having self-intersections. 

\section{D'Atri spaces and the total scalar curvature of hemispheres}

Recall that the \emph{volume density function} $\theta\colon \mathring{T}M\to \mathbb R$ is defined by the formula 
\[
\theta(\mathbf v)=\|T_{\mathbf v}\exp_p(\mathbf e_1)\wedge\dots\wedge T_{\mathbf v}\exp_p(\mathbf e_n)\|,
\]
where $\mathbf v\in \mathring{T}_pM$,  $(\mathbf e_1,\dots,\mathbf e_n)$ is an orthonormal basis of the Euclidean linear space $T_{\mathbf v}(T_pM)\cong T_pM$, and $T_{\mathbf v}\exp_p$ denotes the derivative map of the exponential map $\exp_p$ at $\mathbf v$.

Given a unit tangent vector $\mathbf u\in SM$, the coefficients $a_k(\mathbf u)$ in the Taylor series $\sum_{k=0}^{\infty}a_k(\mathbf u)r^k$ of the function $\theta(r\mathbf u)$ can be expressed  explicitly in terms of the curvature tensor of $M$. The initial terms are

	\begin{equation}\label{eq:theta_sorfejtes}
	\theta(r\mathbf u)=1-\frac{\rho(\mathbf u,\mathbf u)}{6}r^2-\frac{\nabla_{\mathbf u}\rho(\mathbf u,\mathbf u)}{12}r^3+O(r^4),
\end{equation}
see \cite[Cor.~2.4]{Chen_Vanhecke}.

\begin{definition}
	A Riemannian manifold $M$ is a \emph{D'Atri space} if for any point $p\in M$, the local geodesic symmetry in $p$ is volume-preserving, or equivalently, if for all $p\in M$, there is a ball $B_p(r)\subset \mathring{T}_pM$ such that $\theta(\mathbf v)= \theta(-\mathbf v)$ for all $\mathbf v\in B_p(r)$.  
\end{definition}

The definition implies at once, that in a D'Atri space, all the odd coefficients $a_{2k+1}(\mathbf u)$ in the Taylor series of the function $\theta(r\mathbf u)$ must vanish. The identity $a_3(\mathbf u)\equiv 0$, also called the third Ledger condition $L_3$, means that the Ricci tensor of $M$ is cyclic parallel, i.e., it satisfies the identity
\[
\nabla_X\rho(Y,Z)+\nabla_X\rho(Y,Z)+\nabla_X\rho(Y,Z)=0.
\] 
It was proved by Z.~I.~Szab\'o \cite[Ch.~2, Thm.~1.1]{Szabo2}, that any Riemannian manifold with cyclic parallel Ricci tensor, in particular, every D'Atri space is a real analytic Riemannian manifold, consequently in such manifolds, the function $\theta(r\mathbf u)$ coincides with the sum of its Taylor series $\sum_{k=0}^{\infty}a_k(\mathbf u)r^k$ when $r$ is small. This also gives the equivalence of the D'Atri property to the vanishing of all the odd coefficients $a_{2k+1}$.

The following technical lemma provides a characterization of spaces with cyclic parallel Ricci tensor.

\begin{lemma}\label{lin}
	The following two statements are equivalent for an $n$-dimensional Riemannian manifold:
	\begin{itemize}
		\item[(i)] $\nabla_X\rho(X,X)\equiv 0$ and $\nabla \tau\equiv 0$. 
		\item[(ii)] $\nabla_X\rho(X,X)+c \nabla_X \tau\|X\|^2 \equiv 0$ for some constant $c\neq -2/(n+2)$. 
	\end{itemize}	
\end{lemma}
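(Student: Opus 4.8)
The implication (i) $\Rightarrow$ (ii) is immediate, since if $\nabla_X\rho(X,X)\equiv 0$ and $\nabla\tau\equiv 0$ then $\nabla_X\rho(X,X)+c\,\nabla_X\tau\,\|X\|^2\equiv 0$ for \emph{every} constant $c$. So the content of the lemma is the converse, and the plan for (ii) $\Rightarrow$ (i) is to polarize the cubic identity in (ii) and then contract. Both $X\mapsto\nabla_X\rho(X,X)$ and $X\mapsto\nabla_X\tau\,\|X\|^2$ are homogeneous polynomials of degree $3$ on $T_pM$, so (ii) is equivalent to the vanishing of the corresponding symmetric trilinear forms; since $\rho$ and $\langle\,,\rangle$ are symmetric, this polarized identity is
\[
\nabla_X\rho(Y,Z)+\nabla_Y\rho(Z,X)+\nabla_Z\rho(X,Y)
+c\bigl(\nabla_X\tau\,\langle Y,Z\rangle+\nabla_Y\tau\,\langle Z,X\rangle+\nabla_Z\tau\,\langle X,Y\rangle\bigr)=0
\]
for all $X,Y,Z\in T_pM$.

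Next I would take the metric trace over $Y$ and $Z$ of this identity. This uses three standard facts: the trace of $\nabla_X\rho$ over its last two slots is $\nabla_X\tau$ (because $\nabla$ is metric); the twice-contracted second Bianchi identity $\sum_i\nabla_{e_i}\rho(e_i,X)=\tfrac12\nabla_X\tau$; and $\sum_i\langle e_i,e_i\rangle=n$ together with $\sum_i\nabla_{e_i}\tau\,\langle e_i,X\rangle=\nabla_X\tau$ for an orthonormal basis $(e_i)$. The three curvature terms then contribute $\bigl(1+\tfrac12+\tfrac12\bigr)\nabla_X\tau=2\,\nabla_X\tau$, and the three scalar terms contribute $c(n+1+1)\,\nabla_X\tau=c(n+2)\,\nabla_X\tau$, so the trace of the identity is
\[
\bigl(2+c(n+2)\bigr)\,\nabla_X\tau=0\qquad\text{for all }X.
\]
Since $c\neq-2/(n+2)$, the scalar $2+c(n+2)$ is nonzero, whence $\nabla\tau\equiv 0$. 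Feeding this back into (ii) annihilates the second term and leaves $\nabla_X\rho(X,X)\equiv 0$, which is exactly (i).

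I do not expect a genuine obstacle here; the only points requiring care are the combinatorial bookkeeping in the polarization step (the overall factor $3$ in each symmetrized group is common and drops out) and the correct normalization of the contracted second Bianchi identity. The hypothesis $c\neq-2/(n+2)$ is seen to be sharp and unavoidable from the computation itself: $c=-2/(n+2)$ is precisely the value making the trace of the left-hand side of (ii) vanish identically, so for that $c$ the argument — and in fact the statement — would break down.
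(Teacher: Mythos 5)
Your proof is correct and follows essentially the same route as the paper: polarize the cubic identity, contract over two slots using the twice-contracted second Bianchi identity $2\,\mathrm{div}\,\rho=\nabla\tau$, and obtain $(c(n+2)+2)\nabla_X\tau=0$, which forces $\nabla\tau\equiv 0$ since $c\neq -2/(n+2)$. The only cosmetic difference is that the paper verifies the identity $\sum_i\nabla_X\rho(E_i,E_i)=\nabla_X\tau$ by an explicit computation with connection forms, whereas you invoke it as the standard fact that covariant differentiation commutes with metric traces.
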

\begin{proof} It is clear that $(i)\Longrightarrow (ii)$, consider the converse. Polarizing $(ii)$ we get 
	\[\big(\nabla_X\rho(Y,Z)+\nabla_Y\rho(Z,X)+\nabla_Z\rho(X,Y)\big)+c\big(\nabla_X\tau\langle Y, Z\rangle +\nabla_Y\tau\langle Z, X\rangle+\nabla_Z\tau\langle X, Y\rangle\big)\equiv 0.\]	
	To prove $(i)$ at a particular point $p\in M$, choose an orthonormal frame $E_1,\dots,E_n$ around $p$, substitute $Y=Z=E_i$ into the above identity and take sum for $i$. Using the identity $2\textrm{div}\,\rho=\nabla\tau$, this gives
	\begin{align*}0& \equiv \sum_{i=1}^n\Big(\nabla_X\rho(E_i,E_i)+\nabla_{E_i}\rho(X,E_i)+\nabla_{E_i}\rho(E_i,X)\Big)+c\Big(n\nabla_X\tau+2\sum_{i=1}^n\nabla_{\langle X,E_i\rangle E_i}\tau\Big)\\&=\Big(\sum_{i=1}^n\nabla_X\rho(E_i,E_i)\Big)+2\textrm{div}\,\rho(X)+c(n+2)\nabla_X\tau=\Big(\sum_{i=1}^n\nabla_X\rho(E_i,E_i)\Big)+(c(n+2)+1)\nabla_X\tau.
	\end{align*}
Introducing the notation $\omega_i^j(X)=\langle\nabla_XE_i,E_j\rangle$ and using the skew symmetry 
\[
\omega_i^j(X)+\omega_j^i(X)=\nabla_X\langle E_i,E_j\rangle=0,
\]
we also have
	\begin{align*}\nabla_X\tau=\sum_{i=1}^n\nabla_X(\rho(E_i,E_i))=\sum_{i=1}^n\big(\nabla_X\rho(E_i,E_i)\big)-2\sum_{i,j=1}^n\rho(\omega_i^j(X)E_j,E_i)\big)=\sum_{i=1}^n\nabla_X\rho(E_i,E_i).
	\end{align*}
	Hence $(c(n+2)+2)\nabla_X\tau \equiv 0$, which yields $\nabla\tau=0$ and $(i)$.
\end{proof}
 
The following consequence of the Steiner-type formula of  E.~Abbena, A.~Gray, and L.~Vanhecke \cite{Abbena_Gray_Vanhecke} will play a crucial role in the proof of the main theorem of this section. 
\begin{lemma}\label{lem:Steiner_for_spheres}
	The volume density function, and the scalar curvature of geodesic spheres are related by the formula
\begin{equation}\label{eq:Steiner_for_spheres}
	\big(\rho(\gamma'_{\mathbf u}(r),\gamma'_{\mathbf u}(r))+\tau^S(r\mathbf u)-\tau(\gamma_{\mathbf u}(r))\big)\theta(r\mathbf u)=\partial_r^2\theta(r\mathbf u)+2(n-1)\partial_r\theta(r\mathbf u)\frac{1}{r} +(n-1)(n-2)\theta(r\mathbf u)\frac{1}{r^2},
\end{equation}	
where $\mathbf u\in S_pM$ is an arbitrary unit tangent vector, $\gamma_{\mathbf u}$ is the unit speed geodesic with initial velocity $\gamma_{\mathbf u}'(0)=\mathbf u$, $0<r<\inj(p)$.
\end{lemma}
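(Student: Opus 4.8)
The plan is to obtain \eqref{eq:Steiner_for_spheres} by combining two classical ingredients --- the Gauss equation for the geodesic sphere $\mathcal S_p(r)$ viewed as a hypersurface of $M$, and the Riccati equation satisfied by its shape operator --- and then to rewrite the outcome in terms of $\theta(r\mathbf u)$ via the standard identification of the volume density with the Jacobi endomorphism along $\gamma_{\mathbf u}$. Throughout, I would fix the unit-speed geodesic $\gamma=\gamma_{\mathbf u}$, write $'=\nabla_{\gamma'}$ for covariant differentiation along $\gamma$, and let $\mathcal R(r)$ be the curvature endomorphism $\mathcal R(r)X=R(X,\gamma'(r))\gamma'(r)$ of the normal space $\gamma'(r)^{\perp}$. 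Let $\mathcal A(r)$ be the Jacobi endomorphism of $\gamma'(r)^{\perp}$, i.e.\ the solution of $\mathcal A''+\mathcal R\mathcal A=0$ with $\mathcal A(0)=0$ and $\mathcal A'(0)=\mathrm{id}$; for $0<r<\inj(p)$ it is invertible, the outward unit normal of $\mathcal S_p(r)$ at $\gamma(r)$ is $N=\gamma'(r)=\gamma'_{\mathbf u}(r)$, and the shape operator of $\mathcal S_p(r)$ with respect to $N$ is $S(r)=\mathcal A'(r)\mathcal A(r)^{-1}$.

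I would invoke two standard facts about $\mathcal A$ and $S$. First, comparing the Riemannian volume element in geodesic polar coordinates with the one in normal coordinates gives $\det\mathcal A(r)=r^{\,n-1}\theta(r\mathbf u)$, hence $\operatorname{tr}S(r)=\partial_r\log\det\mathcal A(r)=\partial_r\log\!\big(r^{\,n-1}\theta(r\mathbf u)\big)$. Second, differentiating $S=\mathcal A'\mathcal A^{-1}$ and using $\mathcal A''=-\mathcal R\mathcal A$ yields the Riccati equation $S'=-S^2-\mathcal R$; taking traces and using $\operatorname{tr}\mathcal R=\rho(\gamma',\gamma')$ gives $(\operatorname{tr}S)'=-\operatorname{tr}(S^2)-\rho(\gamma',\gamma')$.

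Next I would apply the Gauss equation: choosing an orthonormal eigenbasis of $S$ in $T_{\gamma(r)}\mathcal S_p(r)$, completing it by $N$, and summing the sectional-curvature form of the Gauss equation over all pairs of basis vectors, one obtains
\[
\tau^S(r\mathbf u)=\tau(\gamma(r))-2\rho(N,N)+(\operatorname{tr}S)^2-\operatorname{tr}(S^2).
\]
Consequently the bracket on the left of \eqref{eq:Steiner_for_spheres} equals $\rho(N,N)+\tau^S-\tau=(\operatorname{tr}S)^2-\operatorname{tr}(S^2)-\rho(N,N)$, and substituting $\operatorname{tr}(S^2)=-(\operatorname{tr}S)'-\rho(N,N)$ from the traced Riccati equation, it collapses to $(\operatorname{tr}S)^2+(\operatorname{tr}S)'$. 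Now set $\phi(r)=r^{\,n-1}\theta(r\mathbf u)$, so that $m(r):=\operatorname{tr}S(r)=\phi'/\phi$ by the first fact; then $m^2+m'=\phi''/\phi$, and therefore the left-hand side of \eqref{eq:Steiner_for_spheres} equals $(m^2+m')\,\theta(r\mathbf u)=(\phi''/\phi)\,\theta(r\mathbf u)=\phi''/r^{\,n-1}$. Expanding $\phi''=\partial_r^2\big(r^{\,n-1}\theta(r\mathbf u)\big)=r^{\,n-1}\partial_r^2\theta+2(n-1)r^{\,n-2}\partial_r\theta+(n-1)(n-2)r^{\,n-3}\theta$ and dividing by $r^{\,n-1}$ reproduces exactly the right-hand side of \eqref{eq:Steiner_for_spheres}; from this viewpoint the identity is just the pointwise content of the Steiner-type formula of Abbena, Gray and Vanhecke for tubes about a point.

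The computation will be elementary, with no essential difficulty; the only real care will be needed in matching conventions for the three ingredients --- the sign in the Gauss equation, the sign in the Riccati equation, and, most importantly, the factor $r^{\,n-1}$ distinguishing $\det\mathcal A$ (the Jacobian of $\exp_p$ in polar coordinates) from $\theta$ (the Jacobian in normal coordinates). A one-line check on the round hypersphere of radius $r$ in Euclidean $\mathbb R^n$, where $\theta\equiv 1$, $S=r^{-1}\mathrm{id}$, $\rho\equiv 0$ and $\tau^S=(n-1)(n-2)/r^2$, fixes all the signs, after which the three displayed identities chain together mechanically to give \eqref{eq:Steiner_for_spheres}.
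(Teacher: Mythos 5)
Your argument is correct, and all the individual steps check out: the contracted Gauss equation $\tau^S=\tau-2\rho(N,N)+(\operatorname{tr}S)^2-\operatorname{tr}(S^2)$, the traced Riccati equation $(\operatorname{tr}S)'=-\operatorname{tr}(S^2)-\rho(N,N)$ for $S=\mathcal A'\mathcal A^{-1}$, and the identities $\det\mathcal A(r)=r^{n-1}\theta(r\mathbf u)$ and $m'+m^2=\phi''/\phi$ for $m=\phi'/\phi$ chain together exactly as you describe, and your Euclidean sanity check confirms the sign conventions. However, your route is genuinely different from the paper's. The paper does not touch the shape operator or Jacobi endomorphism directly: it computes the volume $V_{\mathcal U}(h)$ of a one-sided parallel domain over a piece $\mathcal U\subset\mathcal S_p(r)$ in two ways --- once by integrating $\theta$ in polar coordinates and Taylor-expanding in the height $h$, and once by quoting the Steiner-type formula of Abbena, Gray, and Vanhecke, whose $h^3$-coefficient is precisely $\rho(\gamma'_{\mathbf u}(r),\gamma'_{\mathbf u}(r))+\tau^S(r\mathbf u)-\tau(\gamma_{\mathbf u}(r))$ --- and then equates the $h^3$-coefficients of the two integrands. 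Your approach buys self-containedness (you in effect re-derive the relevant coefficient of the Steiner formula from the Gauss and Riccati equations, rather than citing it), at the cost of having to fix several conventions by hand; the paper's approach is shorter given the cited reference and fits the theme of the paper, which leans on the Abbena--Gray--Vanhecke formula elsewhere as well. Your closing remark that the identity is the pointwise content of that Steiner-type formula is exactly the relationship between the two proofs.
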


\begin{proof}
Choose an arbitrary open subset $\mathcal U=\exp_p(r U)\subset \mathcal S_p(r)$  of a geodesic sphere, where $U\subset S_p$ is an open subset, and  compute the volume $V_{\mathcal U}(h)$ of the one-sided parallel domain $\bigcup_{r\leq s\leq r+h} \exp_p(sU)$
of height $h$ over $\mathcal U$ in two different ways for $0<r<r+h<\inj(p)$. 
		First, computing the volume by integrating the density function $\theta$ over the corresponding domain in the tangent space, we obtain
	\begin{equation*}
	\begin{aligned}
		V_{\mathcal U}(h)&=\int_U\int_0^{h}\theta((r+t)\mathbf u)(r+t)^{n-1}\id t \id \mathbf u\\
		&=\int_U\int_0^{h}\Big\{\theta(r\mathbf u)r^{n-1}+\partial_r\big(\theta(r\mathbf u)r^{n-1}\big)t+\partial_r^2\big(\theta(r\mathbf u)r^{n-1}\big)\frac{t^2}{2}+O(t^3)\Big\}\id t \id \mathbf u\\
		&=\int_U\Big\{\theta(r\mathbf u)r^{n-1}h+\big\{\partial_r\theta(r\mathbf u)r^{n-1}+(n-1)\theta(r\mathbf u)r^{n-2}\big\}\frac{h^2}{2}\\&\qquad\quad +\big\{\partial_r^2\theta(r\mathbf u)r^{n-1}+2(n-1)\partial_r\theta(r\mathbf u)r^{n-2} +(n-1)(n-2)\theta(r\mathbf u)r^{n-3}\big\}\frac{h^3}{6}+O(h^4)\Big\} \id \mathbf u.
	\end{aligned}
\end{equation*}	
	On the other hand, the Steiner-type formula of E.~Abbena, A.~Gray, and L.~Vanhecke \cite[Thm.~3.5]{Abbena_Gray_Vanhecke} tells us that 
		\begin{equation*}\label{eq:volum_of_parallel_domain2}
			V_{\mathcal U}(h)=\int_U\Big\{h-H(r\mathbf u) \frac{h^2}{2} +	\big(\rho(\gamma'_{\mathbf u}(r),\gamma'_{\mathbf u}(r))+\tau^S(r\mathbf u)-\tau(\gamma_{\mathbf u}(r))\big)\frac{h^3}{6}+O(h^4)\Big\}r^{n-1}\theta(r\mathbf u)\id  \mathbf u,
	\end{equation*}		
		where $H(r\mathbf u)$ is the trace of the Weingerten map of $\mathcal S_p(r)$ at $\gamma_{\mathbf u}(r)$ relative to the normal vector $\gamma_{\mathbf u}'(r)$.

As the two integrals expressing $V_{\mathcal U}(h)$ are equal for any open subset $U\subset S_p$ and any $0<r<\inj(p)$, the integrands must be equal pointwise. Equating the coefficients of $h^3$ in the expansions of the integrands yields the desired identity.
\end{proof}	
	
Now we prove the main theorem of this section.

\begin{theorem}\label{thm:main1} For a Riemannian manifold $(M,\langle\,,\rangle)$, the following statements are equivalent:
	\begin{enumerate}[label=(\roman*)]
	\item \label{item:(i)} $M$ is a D'Atri space.
	\item \label{item:(iii)} The product $\tau^S\theta$ is an even function, i.e., $\tau^S(\mathbf v)\theta(\mathbf v)=\tau^S(-\mathbf v)\theta(-\mathbf v)$ whenever both sides are defined.
	\item \label{item:(iv)} The total scalar curvatures of any two geodesic hemispheres lying on a given geodesic sphere are equal.
\end{enumerate}
\end{theorem}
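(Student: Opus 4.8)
The plan is to establish the cycle $(i)\Rightarrow(ii)\Rightarrow(iii)\Rightarrow(i)$, using Lemma~\ref{lem:Steiner_for_spheres} to convert geometric data about hemispheres into analytic statements about the volume density $\theta$, and then Lemma~\ref{lin} to pass from the resulting Ledger-type condition back to the D'Atri property.

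For $(i)\Rightarrow(ii)$: if $M$ is D'Atri, then $\theta(\mathbf v)=\theta(-\mathbf v)$ near each $p$, hence all $r$-derivatives of $\theta(r\mathbf u)$ and $\theta(-r\mathbf u)=\theta(r(-\mathbf u))$ agree after the sign change. In the identity \eqref{eq:Steiner_for_spheres}, replace $\mathbf u$ by $-\mathbf u$: the geodesic $\gamma_{-\mathbf u}$ traverses the same sphere $\mathcal S_p(r)$ but the endpoint is the antipodal point, and $\rho(\gamma_{-\mathbf u}'(r),\gamma_{-\mathbf u}'(r))$ equals $\rho(\gamma_{\mathbf u}'(r),\gamma_{\mathbf u}'(r))$ at that antipodal point while $\tau(\gamma_{-\mathbf u}(r))$ is just the ambient scalar curvature there. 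The right-hand side of \eqref{eq:Steiner_for_spheres} for $-\mathbf u$ is obtained from that for $\mathbf u$ by the substitution $\theta(r\mathbf u)\mapsto\theta(-r\mathbf u)$, which by D'Atri equals $\theta(r\mathbf u)$; so comparing the two versions of \eqref{eq:Steiner_for_spheres} at the two antipodal points forces $\big(\tau^S\theta\big)(\mathbf v)=\big(\tau^S\theta\big)(-\mathbf v)$ — the ambient terms cancel because $\rho-\tau$ evaluated along the two geodesics match up. One must be a little careful that $(ii)$ is a pointwise statement in $TM$, not just along a single geodesic, but since $p$ and $\mathbf u$ were arbitrary this is fine.

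For $(ii)\Rightarrow(iii)$: the total scalar curvature of $\mathcal S^+(\mathbf v)$ is $\int_{S^+(\mathbf v)}\tau^S(\mathbf w)\,\theta(\mathbf w)\,\id\mathbf w$ after pulling back by $\exp_p$, because $\theta$ is exactly the Jacobian of $\exp_p$. Two hemispheres on the same sphere $\mathcal S_p(\|\mathbf v\|)$ that are reflections of each other through the origin, $S^+(\mathbf v)$ and $S^+(-\mathbf v)$, then have equal total scalar curvature by $(ii)$ via the change of variables $\mathbf w\mapsto-\mathbf w$. For two \emph{arbitrary} hemispheres $\mathcal S^+(\mathbf v_1),\mathcal S^+(\mathbf v_2)$ on the same sphere, write the difference of their total scalar curvatures as an integral of $(\tau^S\theta)(\mathbf w)$ over the symmetric difference of the two half-spheres; since $(\tau^S\theta)$ is even and the symmetric difference of two hemispheres is centrally symmetric (it is a "lune" symmetric under $\mathbf w\mapsto-\mathbf w$ only after we also swap the two lune pieces), the two contributions cancel. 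Concretely: $\int_{S^+(\mathbf v_1)}-\int_{S^+(\mathbf v_2)} = \int_{S^+(\mathbf v_1)\cap S^+(-\mathbf v_2)} - \int_{S^+(-\mathbf v_1)\cap S^+(\mathbf v_2)}$, and the map $\mathbf w\mapsto-\mathbf w$ carries the first domain onto the second while preserving $\tau^S\theta$, so the difference vanishes.

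For $(iii)\Rightarrow(i)$: this is the substantive direction. Fix $p$ and a unit vector $\mathbf u$, and consider the one-parameter family of hemispheres obtained by tilting the bounding great sphere slightly around $\mathbf u$; equating total scalar curvatures across the family and differentiating at radius $0$ using the expansions — $\tau^S$ and $\theta$ both have known Taylor coefficients in $r$ — should produce the vanishing of the odd-order contributions one by one. At order $r^3$ (the first nontrivial one), \eqref{eq:theta_sorfejtes} together with \eqref{eq:Steiner_for_spheres} expresses the relevant part of $\tau^S\theta$ in terms of $\rho$, $\nabla\rho$, $\tau$ and $\nabla\tau$; the hemisphere condition then yields an identity of the form $\nabla_X\rho(X,X)+c\,\nabla_X\tau\,\|X\|^2\equiv 0$ for the specific constant $c$ coming from the spherical averaging, and I expect $c\neq -2/(n+2)$ so that Lemma~\ref{lin} applies and gives $\nabla\tau\equiv0$ and $\nabla_X\rho(X,X)\equiv0$, i.e. the Ledger condition $L_3$. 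Higher odd coefficients are handled the same way, using Szabó's analyticity to conclude that vanishing of all odd $a_{2k+1}$ is equivalent to D'Atri. The main obstacle is the bookkeeping in $(iii)\Rightarrow(i)$: one must show that the integral identity for hemispheres, after expanding in $r$ and integrating over the relevant half-sphere (or over the lune separating two hemispheres), isolates exactly the odd part of $\tau^S\theta$ and that the spherical-average constant $c$ avoids the forbidden value $-2/(n+2)$ — verifying that constant is where the real computation lies, and is presumably why Lemma~\ref{lin} was set up with that precise exclusion.
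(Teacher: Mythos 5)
Your overall skeleton (the cycle through the three conditions, Lemma~\ref{lem:Steiner_for_spheres} as the bridge between $\theta$ and $\tau^S$, and Lemma~\ref{lin} to upgrade the order-$r$ coefficient identity to the Ledger condition $L_3$) matches the paper's strategy, and your lune argument for (ii)$\Rightarrow$(iii) is fine. But there are two genuine gaps. First, in (i)$\Rightarrow$(ii) you assert that the ambient terms $\rho(\gamma_{\mathbf u}'(r),\gamma_{\mathbf u}'(r))$ and $\tau(\gamma_{\mathbf u}(r))$ ``match up'' when $\mathbf u$ is replaced by $-\mathbf u$. They are evaluated at the two \emph{antipodal} points $\gamma_{\mathbf u}(r)$ and $\gamma_{\mathbf u}(-r)$, and in a general manifold with $\theta$ even there is no reason for these values to agree. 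The correct argument uses that D'Atri implies the cyclic parallel Ricci condition, which forces $\rho(\gamma_{\mathbf u}',\gamma_{\mathbf u}')$ to be constant along each geodesic and $\tau$ to be constant on $M$; only then is the right-hand side of \eqref{eq:Steiner_for_spheres} an even function of $r$. You never invoke this, and without it the step fails.

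Second, (iii)$\Rightarrow$(i) is where the real missing idea lies, and your ``tilt the bounding great sphere and differentiate'' heuristic does not supply it. The hemisphere condition says that the hemispherical transform of the restriction of $\tau^S\theta$ to each sphere $S_p(r)$ is constant; to conclude that this restriction is an \emph{even} function one needs the harmonic-analysis fact that the hemispherical transform is injective on odd functions (equivalently, constant hemispherical transform implies evenness — the paper cites Groemer, Prop.~3.4.11). Your first-order tilting argument at best kills the degree-one spherical harmonic component, not all odd harmonics, so it does not yield evenness of $\tau^S\theta$, nor does it deliver the full hierarchy of odd-coefficient conditions you need. Moreover, even once $\tau^S\theta$ is known to be even and $L_3$ holds (your constant is $c=-12/(n^2+3n+14)$, which indeed avoids $-2/(n+2)$ for $n\ge 3$ since $12(n+2)-2(n^2+3n+14)=-2(n-1)(n-2)$), passing from evenness of $\tau^S\theta$ to evenness of $\theta$ is not automatic: one must substitute the Laurent series of both functions into \eqref{eq:Steiner_for_spheres} to get the recursion $a_{k+2}=\bigl(Ca_k+b_k\bigr)/\bigl((k+n+1)(k+n)\bigr)$ (using that $C=\rho(\gamma'_{\mathbf u},\gamma'_{\mathbf u})-\tau$ is constant along geodesics, itself a consequence of $L_3$) and run an induction on the odd coefficients. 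Your phrase ``higher odd coefficients are handled the same way'' glosses over exactly this mechanism.
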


\begin{proof}
	First we show the implication  
	$\ref{item:(i)}\Longrightarrow \ref{item:(iii)}$. Expressing the function $r\mapsto \tau^S(r\mathbf u)$ for an arbitrary fixed unit tangent vector $\mathbf u\in SM$ with the help of Lemma \ref{lem:Steiner_for_spheres}, we obtain
	\begin{equation}\label{eq:tauS_from_theta}
		\tau^S(r\mathbf u)=\frac{\partial_r^2\theta(r\mathbf u)}{\theta(r\mathbf u)}+2(n-1)\frac{\partial_r\theta(r\mathbf u)}{\theta(r\mathbf u)}\frac{1}{r} +(n-1)(n-2)\frac{1}{r^2} +\tau(\gamma_{\mathbf u}(r))-\rho(\gamma'_{\mathbf u}(r),\gamma'_{\mathbf u}(r)).
	\end{equation}
	If $M$ is a D'Atri space, then $\theta$ is an even function, $M$ has cyclic parallel Ricci tensor, that is $\nabla_X\rho(X,X)\equiv 0$, and the scalar curvature $\tau$ of $M$ is constant. Having cyclic parallel Ricci tensor implies that the function $\rho(\gamma'_{\mathbf u},\gamma_{\mathbf u}')$ is constant on the domain of $\gamma_{\mathbf u}$. Hence the right hand side of \eqref{eq:tauS_from_theta}, and consequently both $\tau^S(r\mathbf u)$ and $\tau^S(r\mathbf u)\theta(r\mathbf u)$ are even functions of $r$.
	
	To prove that 
	$\ref{item:(iii)}$ implies $\ref{item:(i)}$, we first prove that  that $\ref{item:(iii)}$ implies the Ledger condition $L_3$. Choose an arbitrary unit tangent vector $\mathbf u\in SM$ and consider the functions $\theta(r\mathbf u)$, $\tau^S(r\mathbf u)$ and  $\tau^S(r\mathbf u)\theta(r\mathbf u)$. According to \cite[Thm.~4.4]{Chen_Vanhecke}, we have the power expansion
	\begin{align*}
	\tau^S(r\mathbf u)=\frac{(n-1)(n-2)}{r^2}+\left(\tau-\frac{2(n+1)}{3}\rho(\mathbf u,\mathbf u)\right)+\left(\nabla_{\mathbf u}\tau-\frac{n+2}{2}\nabla_{\mathbf u}\rho(\mathbf u,\mathbf u)\right)r+O(r^2),
	\end{align*}
which, combined with \eqref{eq:theta_sorfejtes}, yields
\begin{align*}
	\tau^S(r\mathbf u)\theta(r\mathbf u)=\frac{(n-1)(n-2)}{r^2}&{}+{}\left(\tau -\frac{n^2+n+6}{6}\rho(\mathbf u,\mathbf u)\right)\\ &+\left(\nabla_{\mathbf u}\tau-\frac{n^2+3n+14}{12}\nabla_{\mathbf u}\rho(\mathbf u,\mathbf u)\right)r+O(r^2).
\end{align*}
The coefficients of odd powers of $r$ have to vanish in the power expansion of an even function, so if  $\tau^S\theta$ is even, then the coefficient $\nabla_{\mathbf u}\tau-\frac{n^2+3n+14}{12}\nabla_{\mathbf u}\rho(\mathbf u,\mathbf u)$ of $r$ in its expansion must vanish for every $\mathbf u\in SM$. By Lemma \ref{lin}, this gives that $M$ satisfies the $L_3$ condition and has constant scalar curvature, hence $C(\mathbf u)=\rho(\gamma'_{\mathbf u}(r),\gamma'_{\mathbf u}(r))-\tau(\gamma_{\mathbf u}(r))$ is constant on the domain of $\gamma_{\mathbf u}$. Another important corollary of the third Ledger condition is that $M$ is a real analytic Riemannian manifold, therefore the functions $\theta(r\mathbf u)$ 
and $\tau^S(r\mathbf u)\theta(r\mathbf u)$ can be written as the sum of their Laurent series 
\[
\theta(r\mathbf u)=\sum_{k=0}^{\infty}a_k(\mathbf u)r^k,
\qquad \tau^S(r\mathbf u)\theta(r\mathbf u)=\sum_{k=-2}^{\infty}b_k(\mathbf u)r^k
\]
for small values of $r\neq  0$. Substituting these Laurent series into \eqref{eq:Steiner_for_spheres} and equating the coefficients of $r^k$, we obtain the following recursive equation for the coefficients $a_k$  assuming that we are given the coefficients $b_k$
\[
a_{k+2}=\frac{1}{(k+n+1)(k+n)}(Ca_k+b_k).
\]
This relation allows us to prove by an easy induction that if $\tau^S \theta$ is an even function, then $\theta$ is even as well, i.e., $a_{2k+1}=0$ for all natural number $k$. The base case $a_1=0$ is automatically fulfilled by \eqref{eq:theta_sorfejtes}. Assume $a_{2k-1}=0$. Then equation 
\[
a_{2k+1}=\frac{1}{(2k+n)(2k+n-1)}(Ca_{2k-1}+b_{2k-1})=0
\]
completes the induction step and $\ref{item:(i)}\iff \ref{item:(iii)}$ is proved.

Condition $\ref{item:(iii)}$ implies $\ref{item:(iv)}$ in an obvious way, since for any $\mathbf v\in S_p(r)$, the total scalar curvature of a hemisphere $\mathcal S^+(\mathbf v)$ is equal to the integral $\int_{S^+(\mathbf v)}\tau^S(\mathbf w)\theta(\mathbf w) \id \mathbf w$, which is exactly half the total scalar curvature of the sphere $\mathcal S_p(r)$ if $\tau^S\theta$ is an even function. The converse $\ref{item:(iv)}\Longrightarrow \ref{item:(iii)}$ follows from a classical result  of harmonic analysis on the sphere, as $\ref{item:(iv)}$ means that the hemispherical transformation of the restriction of the function  $\tau^S\theta$ onto any sphere $S_p(r)$ of small radius $r$ is constant and this implies by \cite[Prop.~3.4.11]{Groemer} that these restrictions are even functions.
\end{proof}

\begin{corollary}
	The scalar curvature function $\tau^S$ of any geodesic sphere of small radius in a D'Atri space is an even function. 
\end{corollary}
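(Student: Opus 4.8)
The plan is to extract the corollary directly from the work already carried out in the proof of Theorem \ref{thm:main1}. In establishing the implication $\ref{item:(i)}\Longrightarrow\ref{item:(iii)}$ we wrote, for a fixed unit vector $\mathbf u\in SM$, the identity \eqref{eq:tauS_from_theta},
\[
\tau^S(r\mathbf u)=\frac{\partial_r^2\theta(r\mathbf u)}{\theta(r\mathbf u)}+2(n-1)\frac{\partial_r\theta(r\mathbf u)}{\theta(r\mathbf u)}\frac1r+(n-1)(n-2)\frac1{r^2}+\tau(\gamma_{\mathbf u}(r))-\rho(\gamma'_{\mathbf u}(r),\gamma'_{\mathbf u}(r)),
\]
and I would simply record the parity in $r$ of each summand on the right in a D'Atri space: $\theta(r\mathbf u)$ is even by the very definition of a D'Atri space, hence $\partial_r^2\theta(r\mathbf u)$ is even and $\partial_r\theta(r\mathbf u)$ is odd, so that $\partial_r\theta(r\mathbf u)/r$ is even as well; the term $(n-1)(n-2)/r^2$ is visibly even; and $\tau(\gamma_{\mathbf u}(r))-\rho(\gamma'_{\mathbf u}(r),\gamma'_{\mathbf u}(r))$ is in fact constant in $r$, because a D'Atri space has constant scalar curvature and its cyclic parallel Ricci tensor forces $\rho(\gamma'_{\mathbf u},\gamma'_{\mathbf u})$ to be constant along $\gamma_{\mathbf u}$. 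Consequently the right-hand side, and therefore $\tau^S(r\mathbf u)$, is an even function of $r$; since $\mathbf u$ is arbitrary, this is exactly the claim $\tau^S(\mathbf v)=\tau^S(-\mathbf v)$ for every $\mathbf v$ on a geodesic sphere of small radius.

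A slightly slicker route, which I might prefer, avoids reopening \eqref{eq:tauS_from_theta} altogether: part \ref{item:(iii)} of Theorem \ref{thm:main1} already gives that $\tau^S\theta$ is even, while $\theta$ is even and strictly positive on geodesic spheres of small radius — positivity holding because $\theta$ is continuous with $\theta(\mathbf 0_p)=1$, equivalently because $\exp_p$ is a diffeomorphism inside the injectivity radius. Hence the quotient $\tau^S=(\tau^S\theta)/\theta$ inherits evenness. Either way the argument is essentially immediate; the only point that warrants a sentence is the non-vanishing of $\theta$ for small radius, which is what licenses the division, so I do not anticipate any real obstacle.
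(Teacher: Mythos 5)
Your first route is exactly the paper's (implicit) argument: the proof of $\ref{item:(i)}\Longrightarrow\ref{item:(iii)}$ in Theorem \ref{thm:main1} already notes that every summand on the right of \eqref{eq:tauS_from_theta} is even in $r$ in a D'Atri space, and explicitly records that "consequently both $\tau^S(r\mathbf u)$ and $\tau^S(r\mathbf u)\theta(r\mathbf u)$ are even," which is the corollary. Both of your variants are correct, and the remark about the positivity of $\theta$ licensing the division in the second variant is a fair (if minor) point of care.
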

\begin{question}
	Assume that $\tau^S$ is an even function for a Riemannian manifold. Does it follow that the manifold is a D'Atri space? 
\end{question}
\section{3-dimensional D'Atri spaces and the total scalar curvature of tubes\label{sec:3dim}}
In this section, we strengthen Theorem \ref{thm:main1} in the $3$-dimensional case. The distinguished role of dimension three is due to the Gauss--Bonnet theorem, controlling the total scalar curvature of surfaces.

\begin{theorem}\label{thm:main2}
For a $3$-dimensional Riemannian manifold $(M,\langle\,,\rangle)$, the following conditions are equivalent:
\begin{enumerate}[label=(\roman*)]
	\item $M$ is a D'Atri space.
	\item The total scalar curvature of any geodesic hemisphere is equal to $4\pi$.
	\item The total scalar curvature of a tube of small radius about a regular curve is $0$.
	\item The total scalar curvature of a tube of small radius about a regular curve depends only on the length of the curve and the radius of the tube. 
\end{enumerate}
\end{theorem}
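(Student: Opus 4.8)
The plan is to deduce all four equivalences from Theorem~\ref{thm:main1}, the Gauss--Bonnet theorem, and a single bridging identity for tubes in a $3$-manifold. Write $\mathrm{tsc}(N)$ for the total scalar curvature of a surface $N$, and let $\gamma\colon[a,b]\to M$ be parametrized by arc length. The boundary circle of $\mathcal T_\circ(\gamma,r)$ over the endpoint $\gamma(a)$ is the exponential image of the circle of radius $r$ in the plane $\gamma'(a)^{\perp}\subset T_{\gamma(a)}M$; hence it lies on the geodesic sphere $\mathcal S_{\gamma(a)}(r)$ and is precisely the common boundary of the two geodesic hemispheres $\mathcal S^+(\pm r\gamma'(a))$. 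The first step is to show that the ``rear'' cap $\mathcal S^+(-r\gamma'(a))$ is attached to the tube along this circle with coinciding tangent planes, and likewise the ``front'' cap $\mathcal S^+(r\gamma'(b))$ at the other end. Parametrizing the tube near the circle by $(t,\vartheta)\mapsto\exp_{\gamma(t)}(r\,\mathbf e(t,\vartheta))$ with $\|\mathbf e\|\equiv 1$ and $\mathbf e(t,\cdot)\perp\gamma'(t)$, the tangent plane of the tube at such a boundary point is spanned by the circle direction and by $J(r)$, where $J$ is the Jacobi field along the radial geodesic $c(s)=\exp_{\gamma(a)}\!\big(s\,\mathbf e(a,\vartheta)\big)$ with $J(0)=\gamma'(a)$ and $\tfrac{D}{\ud s}J(0)=\tfrac{D}{\ud t}\mathbf e(t,\vartheta)\big|_{t=a}$. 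Since $\langle J(0),\dot c(0)\rangle=\langle\gamma'(a),\mathbf e(a,\vartheta)\rangle=0$ and $\langle\tfrac{D}{\ud s}J(0),\dot c(0)\rangle=\tfrac12\,\partial_t\|\mathbf e\|^2=0$, the function $s\mapsto\langle J(s),\dot c(s)\rangle$, affine because $J$ is a Jacobi field, vanishes identically, so $J(r)\perp\dot c(r)$; by the Gauss lemma the tangent plane of $\mathcal S^+(-r\gamma'(a))$ at that point is exactly the orthogonal complement of $\dot c(r)$, which also contains the circle direction, and so the two tangent planes agree.

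Granting this, the bridging identity follows by combining Gauss--Bonnet on three pieces. On the cylinder $\mathcal T_\circ(\gamma,r)$ (Euler characteristic $0$) Gauss--Bonnet reads $\int K\,\ud A=-\int_{C_a}k_g\,\ud s-\int_{C_b}k_g\,\ud s$, where $C_a,C_b$ are the two boundary circles, and on each hemispherical disk (Euler characteristic $1$) it reads $\int K\,\ud A=2\pi-\int_{\partial}k_g\,\ud s$. Because the caps meet the tube with matching tangent planes along $C_a$ and $C_b$, the signed geodesic curvature of a boundary circle computed from the tube side is the negative of the one computed from the cap side (the signed geodesic curvature of a curve in a surface depends only on the tangent planes of the surface along that curve), so these boundary integrals cancel when the three formulas are added; what remains, after multiplying by $2$ since scalar curvature equals twice the Gaussian curvature on a surface, is
\[
\mathrm{tsc}\big(\mathcal T_\circ(\gamma,r)\big)=8\pi-\mathrm{tsc}\big(\mathcal S^+(-r\gamma'(a))\big)-\mathrm{tsc}\big(\mathcal S^+(r\gamma'(b))\big).
\]
Throughout one works on the abstract domain $[a,b]\times S^1$ with the pulled-back metric, so the self-intersections permitted in the definition of tubes cause no difficulty.

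With the identity available, the cycle of implications is short. For $(i)\Rightarrow(ii)$: by Theorem~\ref{thm:main1} any two geodesic hemispheres on a given geodesic sphere have equal total scalar curvature, while Gauss--Bonnet gives that the complementary pair sums to the total scalar curvature $8\pi$ of the whole geodesic sphere (topologically $S^2$), so each has total scalar curvature $4\pi$. Then $(ii)\Rightarrow(iii)$ is immediate from the identity ($8\pi-4\pi-4\pi=0$), and $(iii)\Rightarrow(iv)$ is trivial. For the substantial implication $(iv)\Rightarrow(i)$, fix $p\in M$ and a small radius $r$; by a compactness argument choose a small $\ell>0$ so that for every unit vector $\mathbf u\in S_p$ there is a regular loop $\gamma$ based at $p$ of length $\ell$ whose two ends emanate in the directions $-\mathbf u$ and $\mathbf u$, i.e.\ $\gamma'(a)=-\mathbf u$ and $\gamma'(b)=\mathbf u$. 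For such a curve both caps in the identity equal $\mathcal S^+(r\mathbf u)$, so $\mathrm{tsc}\big(\mathcal S^+(r\mathbf u)\big)=\tfrac12\big(8\pi-\mathrm{tsc}(\mathcal T_\circ(\gamma,r))\big)$, and by $(iv)$ the right-hand side depends only on $\ell$ and $r$; hence $\mathrm{tsc}\big(\mathcal S^+(r\mathbf u)\big)$ is the same for every $\mathbf u\in S_p$. Thus any two geodesic hemispheres on $\mathcal S_p(r)$ have equal total scalar curvature, and Theorem~\ref{thm:main1} yields that $M$ is a D'Atri space.

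The crux is the bridging identity, and within it the tangent-plane matching of the end caps to the tube: the Jacobi field computation of the first paragraph is the heart of the matter, and one must also record carefully that the signed geodesic curvature of a curve in a surface is determined by the tangent planes of the surface along the curve, so that the Gauss--Bonnet boundary contributions genuinely cancel upon gluing. The auxiliary fact that there are regular loops at $p$ of an arbitrary small prescribed length with prescribed endpoint directions, uniformly over $\mathbf u\in S_p$, is routine but should be stated with care; everything else reduces to Theorem~\ref{thm:main1} and the Gauss--Bonnet theorem.
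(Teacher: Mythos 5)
Your proposal is correct, and its backbone --- capping the tube with the two geodesic hemispheres $\mathcal S^+(-r\gamma'(a))$, $\mathcal S^+(r\gamma'(b))$ to form an immersed sphere, applying Gauss--Bonnet, and feeding the resulting hemisphere condition into Theorem~\ref{thm:main1} via loops with $-\gamma'(a)=\gamma'(b)=\mathbf u$ --- is exactly the paper's. Two points differ. First, you prove in detail (via the Jacobi field $J$ with $J(0)=\gamma'(a)$ and the affine function $s\mapsto\langle J(s),\dot c(s)\rangle$) that the caps meet the tube with matching tangent planes, so the geodesic-curvature boundary terms cancel; the paper simply asserts that the capsule is a piecewise smooth $\mathcal C^1$-immersed sphere and applies Gauss--Bonnet to it. Your version is the rigorous form of that assertion and is a worthwhile addition. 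Second, the closing leg of the cycle is genuinely different: the paper deduces $(iv)\Rightarrow(iii)$ by writing the total scalar curvature of a tube as $f(r)l_\gamma$ and killing $f$ with immersed tori about smoothly closed curves, which tacitly uses additivity/linearity in the length (and, for small lengths, an extension beyond what tori alone give, since a closed curve admitting a tube of radius $r$ cannot be too short); you instead go straight to $(iv)\Rightarrow(i)$ by comparing, for a fixed length $\ell$, the capsules over loops pointing in different directions $\mathbf u\in S_p$, so that $\mathrm{tsc}(\mathcal S^+(r\mathbf u))=\tfrac12\big(8\pi-F(\ell,r)\big)$ is manifestly independent of $\mathbf u$. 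This sidesteps the linearity claim entirely, at the price of the (routine, but as you note, worth recording) construction of a family of regular loops of a common small length $\ell$ with prescribed opposite endpoint velocities $-\mathbf u,\mathbf u$, uniformly over the compact set $S_p$, together with a radius $r$ small enough for all the corresponding tubes. Both routes close the cycle $(i)\Rightarrow(ii)\Rightarrow(iii)\Rightarrow(iv)\Rightarrow(i)$ correctly.
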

\begin{proof}

Theorem \ref{thm:main1} implies $(ii)\Longrightarrow (i)$. The total scalar curvature of a geodesic sphere of small radius in $M$ is $8\pi$ by the Gauss--Bonnet theorem. If $M$ is a D'Atri space, then by Theorem \ref{thm:main1}, the total scalar curvature of a geodesic hemisphere and its complementary hemisphere are equal, so they are both equal to $4\pi$. Thus, the equivalence $(i)\iff (ii)$ is proved. 

To prove $(ii)\Longrightarrow (iii)$, consider a tube  $\mathcal T_{\circ}(\gamma,r)$ of small radius $r$ about a regular parameterized curve $\gamma\colon [a,b]\to M$. We may assume without loss of generality that $\gamma$ is of unit speed. The union of the tube $\mathcal T_{\circ}(\gamma,r)$ and the hemispheres $\mathcal S^+(-r\gamma'(a))$ and $\mathcal S^+(r\gamma'(b))$ is the image of a piecewise smooth $\mathcal C^1$-immersion of a ``capsule'' homeomorphic to a sphere into $M$ so its total scalar curvature is $8\pi$ by the Gauss--Bonnet theorem. On the other hand, assumption $(ii)$ impies that the total scalar curvature of the union $\mathcal S^+(-r\gamma'(a))\cup \mathcal S^+(r\gamma'(b))$ is also $8\pi$, therefore the total scalar curvature of the tube $\mathcal T_{\circ}(\gamma,r)$ must be $0$. Conversely, assume that the total scalar curvature of any tube vanishes. Then computing the total scalar curvature of the immersed capsule constructed above we obtain that the sum of the total scalar curvatures of the geodesic hemisheres $\mathcal S^+(-r\gamma'(a))$ and $\mathcal S^+(r\gamma'(b))$ equals $8\pi$. Let $\mathbf u\in SM$ be an arbitrary unit vector and choose the regular curve $\gamma$ so that  $-\gamma'(a)=\gamma'(b)=\mathbf u$. Then $\mathcal S^+(-r\gamma'(a))=\mathcal S^+(r\gamma'(b))=\mathcal S^+(r\mathbf u)$, therefore $\mathcal S^+(r\mathbf u)$ must have total scalar curvature $4\pi$ for any small radius $r$.

Condition $(iv)$ is obviously weaker than $(iii)$. If condition $(iv)$ holds, then there exists a function $f\colon (0,r_0)\to\mathbb R$ such that the total scalar curvature of a tube of small radius $r$ about a regular curve $\gamma\colon [a,b]\to M$ of length $l_{\gamma}$ equals $f(r)l_{\gamma}$. Choosing an arbitrary smoothly closed regular curve $\gamma$, the tubes of small radii about $\gamma$ are immersed tori, so their total scalar curvature vanish by the Gauss--Bonnet theorem. This means that the function $f$ must vanish around $0$, hence  $(iv)\Longrightarrow (iii)$.
\end{proof}

\begin{theorem}\label{thm:main3}
	Assume that the $3$-dimensional Riemannian manifold $(M,\langle\,,\rangle)$ has the property that the total scalar curvature of a cylinder of small radius $r$ about a geodesic segment $\gamma$ depends only on the radius $r$ and the length of $\gamma$. 
\begin{enumerate}[label=(\roman*)]
\item	Then there is a number $a\in \mathbb R$ and a smooth function $b\colon SM\to \mathbb R$ such that for any geodesic curve $\gamma_{\mathbf u}$ with initial velocity $\gamma'_{\mathbf u}(0)=\mathbf u\in SM$, we have  
\begin{equation}\label{eq:K(normal)}
K(\nu(t))=at^2+b(\mathbf u)t+ K(\nu(0)),
\end{equation}
where $K(\nu(t))$ is the sectional curvature in the direction of the normal plane $\nu(t)\subset T_{\gamma_{\mathbf u}(t)}M$ of $\gamma_{\mathbf u}$ at $\gamma_{\mathbf u}(t)$. 
\item If we assume also that $M$ is complete and the sectional curvature of $M$ is bounded, (e.g., if $M$ is compact, or homogeneous), then $M$ is a D'Atri space.
\end{enumerate}
\end{theorem}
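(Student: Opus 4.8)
The overall strategy is to prove (i) by a direct computation combining the Gauss--Bonnet theorem with the Taylor expansion of the total scalar curvature of geodesic hemispheres, and then to deduce (ii) from (i) by a growth argument, Lemma~\ref{lin}, and the known description of $3$-dimensional spaces with cyclic-parallel Ricci tensor. For (i) I would first reproduce the capsule construction from the proof of Theorem~\ref{thm:main2}: for a unit-speed geodesic $\gamma_{\mathbf u}$ restricted to $[0,L]$, capping the cylinder $\mathcal T_{\circ}(\gamma_{\mathbf u}|_{[0,L]},r)$ with the geodesic hemispheres $\mathcal S^{+}(-r\mathbf u)$ and $\mathcal S^{+}(r\gamma_{\mathbf u}'(L))$ yields a (possibly self-intersecting) immersed $2$-sphere, whose total scalar curvature is $8\pi$ by Gauss--Bonnet. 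Writing $g_r(\mathbf v)$ for the total scalar curvature of $\mathcal S^{+}(r\mathbf v)$, the total scalar curvature of the cylinder equals $8\pi-g_r(\gamma_{\mathbf u}'(L))-g_r(-\mathbf u)$; since $g_r(\mathbf v)+g_r(-\mathbf v)=8\pi$ (Gauss--Bonnet on the full geodesic sphere, again as in the proof of Theorem~\ref{thm:main2}), the hypothesis is equivalent to the statement that $g_r(\gamma_{\mathbf u}'(L))-g_r(\mathbf u)$ depends only on $L$ and $r$. Combined with the additivity of the total scalar curvature of a cylinder under subdivision of its central geodesic (and continuity in $L$), this forces
\[
g_r(\gamma_{\mathbf u}'(L))-g_r(\mathbf u)=\alpha(r)\,L
\]
for all $\mathbf u\in SM$, all admissible $L$, and a function $\alpha$ of $r$ alone.

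Next I would Taylor-expand $g_r$ in $r$. Using the Chen--Vanhecke expansions of $\tau^S$ and $\theta$ (equation~\eqref{eq:theta_sorfejtes} and \cite[Thm.~4.4]{Chen_Vanhecke}), or equivalently the Steiner-type Lemma~\ref{lem:Steiner_for_spheres}, and integrating $r^{2}\tau^S(r\mathbf w)\theta(r\mathbf w)$ over the hemisphere $S^{+}(\mathbf v)\subset S_p$ --- the only non-trivial integrals being the explicit tensors $\int_{S^{+}(\mathbf v)}\mathbf w\,\mathrm d\mathbf w$ and $\int_{S^{+}(\mathbf v)}w_iw_jw_k\,\mathrm d\mathbf w$ built from $\mathbf v$ and the metric --- one finds that the $r^{0},r^{1},r^{2}$-coefficients of $g_r(\mathbf v)$ are independent of $\mathbf v$ (the $r^{0}$ one being $4\pi$), while the $r^{3}$-coefficient equals a nonzero constant $\lambda$ times $\frac{\mathrm d}{\mathrm dt}\big|_{0}K(\nu_{\mathbf v}(t))$; here one uses the $3$-dimensional identity $K(\nu)=\frac{\tau}{2}-\rho(\gamma',\gamma')$ for the normal plane of a geodesic. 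Substituting into the displayed relation and comparing $r^{3}$-coefficients gives, via the reparametrization identity $K(\nu_{\gamma_{\mathbf u}'(L)}(t))=K(\nu_{\mathbf u}(L+t))$,
\[
\lambda\,\frac{\mathrm d}{\mathrm dt}\Big|_{t=L}K(\nu_{\mathbf u}(t))-\lambda\,\frac{\mathrm d}{\mathrm dt}\Big|_{t=0}K(\nu_{\mathbf u}(t))=\alpha_3\,L,
\]
where $\alpha_3$ is the $r^{3}$-coefficient of $\alpha$; differentiating in $L$ yields $\frac{\mathrm d^{2}}{\mathrm dt^{2}}K(\nu_{\mathbf u}(t))=2a$ for the universal constant $2a=\alpha_3/\lambda$, first for small $t$ and then, applying the same argument at each point of $\gamma_{\mathbf u}$, for all $t$ in its domain. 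Integrating twice gives \eqref{eq:K(normal)} with $b(\mathbf u)=\frac{\mathrm d}{\mathrm dt}\big|_{0}K(\nu_{\mathbf u}(t))=\frac12\nabla_{\mathbf u}\tau-\nabla_{\mathbf u}\rho(\mathbf u,\mathbf u)$, which is smooth on $SM$, proving (i).

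For (ii), completeness makes every $\gamma_{\mathbf u}$ defined on all of $\mathbb R$, and the sectional curvature bound makes $t\mapsto K(\nu_{\mathbf u}(t))$ bounded; hence the polynomial $at^{2}+b(\mathbf u)t+K(\nu_{\mathbf u}(0))$ is bounded on $\mathbb R$, which forces $a=0$ and $b\equiv 0$. Thus $\frac12\nabla_{\mathbf u}\tau-\nabla_{\mathbf u}\rho(\mathbf u,\mathbf u)\equiv 0$, i.e.\ $\nabla_X\rho(X,X)-\frac12\nabla_X\tau\|X\|^2\equiv 0$, which is the second condition of Lemma~\ref{lin} with the constant $-\frac12\neq-\frac25$; by that lemma $M$ has cyclic-parallel Ricci tensor (equivalently, satisfies the Ledger condition $L_3$) and constant scalar curvature. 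It then suffices to invoke the fact that in dimension $3$ a Riemannian manifold with cyclic-parallel Ricci tensor is locally a homogeneous D'Atri space, hence a D'Atri space; see \cite{Kowalski}. This completes the proof.

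I expect the main obstacle to be the $r^{3}$-coefficient computation: after integrating over the hemisphere one must check that the surviving combination of $\nabla_{\mathbf v}\tau$ and $\nabla_{\mathbf v}\rho(\mathbf v,\mathbf v)$ is exactly proportional to $\frac{\mathrm d}{\mathrm dt}\big|_{0}K(\nu_{\mathbf v}(t))$ and that the lower-order coefficients are $\mathbf v$-independent; this is also precisely where dimension $3$ is used, both through the identity $K(\nu)=\frac{\tau}{2}-\rho(\gamma',\gamma')$ and through the topological rigidity of the total scalar curvature of surfaces (which is what lets Gauss--Bonnet reduce a cylinder to its hemispherical caps). A secondary, non-computational point is the final implication ``cyclic-parallel Ricci $\Rightarrow$ D'Atri in dimension $3$'', which rests on the classification of such spaces rather than on a direct estimate; it is also the only place where the completeness and boundedness hypotheses are really needed, namely to pass from \eqref{eq:K(normal)} to cyclic-parallelness.
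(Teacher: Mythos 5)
Your proof is correct, but part (i) takes a genuinely different route from the paper. The paper proves (i) by quoting the Gheysens--Vanhecke expansion of the total scalar curvature of a tube, specializing the coefficient $B(3)$ to a geodesic axis, and reading off that $\nabla^2_{11}\tau-2\nabla^2_{11}\rho_{11}$ must be a universal constant, which is $2K(\nu)''$ up to a factor. You instead reduce the cylinder to its two hemispherical caps via the Gauss--Bonnet capsule argument (borrowed from the proof of Theorem \ref{thm:main2}), use additivity under subdivision to get $g_r(\gamma_{\mathbf u}'(L))-g_r(\mathbf u)=\alpha(r)L$, and extract $K(\nu)''=\mathrm{const}$ from the $r^3$-coefficient of the hemisphere expansion. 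I checked the key computation you flag as the main obstacle: in dimension $3$ the $r^2$-coefficient of $g_r(\mathbf v)$ is identically $0$ and the $r^3$-coefficient is $-\tfrac{2\pi}{3}\bigl(\tfrac12\nabla_{\mathbf v}\tau-\nabla_{\mathbf v}\rho(\mathbf v,\mathbf v)\bigr)=-\tfrac{2\pi}{3}\,\tfrac{\ud}{\ud t}\big|_{0}K(\nu_{\mathbf v}(t))$, exactly as you claim; your endpoint-difference relation is precisely the once-integrated form of the paper's pointwise relation, so the two arguments agree quantitatively. Your version is more self-contained (it needs only the Chen--Vanhecke expansions already used for Theorem \ref{thm:main1}, not the full tube formula), at the price of the extra functional-equation step; the paper's version is shorter given the quoted formula and works without the detour through hemispheres. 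Part (ii) of your argument coincides with the paper's (bounded polynomial $\Rightarrow$ constant, then Lemma \ref{lin} with $c=-\tfrac12\neq-\tfrac25$, then the external fact that $3$-dimensional $L_3$-spaces are D'Atri). One small correction there: that last fact is due to Pedersen and Tod, not to Kowalski's classification of $3$-dimensional D'Atri spaces, which is the reference the paper actually uses at this step.
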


\begin{proof}
The initial terms of the power expansion of the total scalar curvature $T_{\gamma}(r)$ of a tube of small radius $r$ about a unit speed curve $\gamma\colon [a,b]\to \tilde M$ were computed explicitely by L.~Gheysens and L.~Vanhecke \cite[Thm.~5.1]{Gheysens_Vanhecke} in any $n$-dimensional Riemannian manifold $\tilde M$. Their formula has the form
	\begin{align*}
		T_{\gamma}(r)=c_{n-2}r^{n-4}\int_a^b\{(n-3)(n-2)+A(n)r^2+B(n)r^4+O(r^6)\}\id t,
	\end{align*}
	where $c_{n-2}$ is the volume of the unit sphere in the $(n-1)$-dimensional Euclidean space, 
	\[A(n)=-\frac{n-3}{6(n-1)}\{(n-4)\tau+(n+2)\rho_{11}\}(\gamma(t)),\]
	
	\begin{align*}
		B(n)={}&\frac{1}{n^2-1}\Big\{
		\frac{n^2-9n+2}{72}\tau^2+\frac{n^2+3n+17}{45}\|\rho\|^2-\frac{(n+1)(n+2)}{120}\|R\|^2\\
		&-\frac{(n-3)(n-4)}{20}\Delta\tau-\frac{(n+6)(n-3)}{40}\Delta\rho_{11}+\frac{11n^2-27n+142}{120}\nabla_{11}^2\tau\\
		&+\frac{(n-4)(n+1)}{36}\tau\rho_{11}-\frac{7n^2+21n-46}{180}\sum_{i,j\geq 2}\rho_{ij}R_{1i1j}-\frac{n^2+3n-58}{120}\rho_{11}^2\\
		&-\frac{7n^2+21n+194}{120}\nabla_{11}^2\rho_{11}-\frac{(n+1)(n+2)}{36}\sum_{i,j\geq 2}R_{1i1j}^2\\
		&+\frac{n^2+3n+62}{180}\sum_{i\geq 2}\rho_{1i}^2-\frac{(n+1)(n+2)}{60}\sum_{i,j,k\geq 2}R_{1ijk}^2+\frac{n^2-3n+8}{6}\nabla_{\gamma''}\tau\\
		&-\frac{n^2+3n+14}{6}\nabla_{1}\rho_{1\gamma''}-\frac{n^2+3n+14}{12}\nabla_{\gamma''}\rho_{11}
		\Big\}(\gamma(t)),
	\end{align*}
and the tensor coordinates are taken with respect to an orthonormal frame $E_1=\gamma',E_2,\dots,E_n$ along $\gamma$.

In particular, $c_1=2\pi$, $A(3)=0$, and using the identity $\|R\|^2=4\|\rho\|^2-\tau^2$, valid in any $3$-dimensional Riemannian manifold, a straightforward computation shows that
\begin{align*}
	B(3)={}&\frac{1}{6}\Big\{
	\nabla_{11}^2\tau-2\nabla_{11}^2\rho_{11}+\nabla_{\gamma''}\tau -4\nabla_{1}\rho_{1\gamma''}-2\nabla_{\gamma''}\rho_{11}
	\Big\}(\gamma(t)).
\end{align*}
In the special case when $\gamma$ is a geodesic curve, all the terms containing the acceleration $\gamma''$ disappear, thus, for the total scalar curvature $T_{\gamma}(r)$ of a cylinder of small radius about a geodesic segment $\gamma\colon [a,b]\to M$ lying in a $3$-dimensional manifold $M$, we have 
\[
	T_{\gamma}(r)=2\pi \int_a^b\Big\{\big\{\nabla_{11}^2\tau-2\nabla_{11}^2 \rho_{11}
	\big\}(\gamma(t))\frac{r^3}{6}+O(r^5)\Big\}\id t.
\]
This formula implies that if the total scalar curvature of a cylinder depends only on the radius and the length of the axis of the cylinder, then the coefficient $\hat a=\big\{\nabla_{11}^2\tau-2\nabla_{11}^2 \rho_{11}
\big\}(\gamma(t))$ of $r^3/6$ must be a constant independent of the geodesic $\gamma$ and the parameter $t$. Set $a=\hat a/4$.

Now let $\gamma_{\mathbf u}$, ($\mathbf u \in SM$) be an arbitrary unit speed geodesic in $M$, and let $E_1=\gamma_{\mathbf u}'$, $E_2$, $E_3$ be a parallel orthonormal frame along $\gamma_{\mathbf u}$, $\sigma_{ij}(t)\subset T_{\gamma_{\mathbf u}(t)}M$ the plane spanned by $E_i(t)$ and $E_j(t)$. Then the sectional curvature in the direction of the normal plane $\nu=\sigma_{23}$ can be expressed as 
\[K(\nu)= K(\sigma_{23})= \big(K(\sigma_{12})+ K(\sigma_{23})+ K(\sigma_{31})\big)-\big(K(\sigma_{12}) +K(\sigma_{31})\big)=\frac{1}{2}\tau\circ\gamma_{\mathbf u}-\rho(E_1,E_1).
\]
Differentiating this equation twice with respect to the curve parameter, and using the fact that the vector field $E_1=\gamma'_{\mathbf u}$ is parallel along $\gamma_{\mathbf u}$, we obtain
\[
K(\nu)''= \left(\frac{1}{2}\nabla^2_{11}\tau-\nabla^2_{11}\rho(E_1,E_1)\right)=\frac{\hat a}{2}=2a. 
\]
Thus $K(\nu(t))$ must be a polynomial function of $t$ of degree at most $2$ with leading term $at^2$. In particular, \eqref{eq:K(normal)} holds with a suitably chosen coefficient $b(\mathbf u)$. This proves $(i)$.

To prove $(ii)$, assume $M$ has bounded sectional curvature. Then for any choice of $\gamma_{\mathbf u}$, $K(\nu(t))$ is a bounded polynomial function defined on the whole real line, hence it is constant. Consequently, it has vanishing derivative
\[
\frac{d}{dt}K(\nu(t))= \left(\frac{1}{2}\nabla_{1}\tau-\nabla_{1}\rho(E_1,E_1)\right)=0.
\] 
Evaluating this equation at $t=0$, we obtain $\frac{1}{2}\nabla_{\mathbf u}\tau-\nabla_{\mathbf u}\rho(\mathbf u,\mathbf u)=0$ for any $\mathbf u\in SM$ and by Lemma \ref{lin}, we conclude that $M$ satisfies the Ledger condition $L_3$. H.~Pedersen and P.~Tod \cite{Pedersen_Tod} proved that $3$-dimensional Riemannian manifolds satisfying the third Ledger condition are D'Atri spaces,  so $M$ is a D'Atri space.
\end{proof}

\section{Acknowledgements}
The first and third authors were supported by the National Research Development and Innovation Office (NKFIH) grant No K-128862. During the research they also enjoyed the hospitality of the Alfr\'ed R\'enyi Institute of Mathematics as guest researchers. 

The first author has received funding also from the European Research Council (ERC) under the European Union's Horizon 2020 research and innovation programme (grant agreement No 741420).

The third author was also supported by the National Research Development and Innovation Office (NKFIH) grant No KKP-133864.

\bibliographystyle{acm}
\bibliography{3dim}

\begin{thebibliography}{10}

\bibitem{Abbena_Gray_Vanhecke}
{\sc Abbena, E., Gray, A., and Vanhecke, L.}
\newblock Steiner's formula for the volume of a parallel hypersurface in a
  {R}iemannian manifold.
\newblock {\em Ann. Scuola Norm. Sup. Pisa Cl. Sci. (4) 8}, 3 (1981), 473--493.

\bibitem{Chen_Vanhecke}
{\sc Chen, B.-Y., and Vanhecke, L.}
\newblock Differential geometry of geodesic spheres.
\newblock {\em J. Reine Angew. Math. 325\/} (1981), 28--67.

\bibitem{tube1}
{\sc Csik\'os, B., and Horv\'ath, M.}
\newblock Harmonic manifolds and the volume of tubes about curves.
\newblock {\em J. Lond. Math. Soc. (2) 94}, 1 (2016), 141--160.

\bibitem{tube2}
{\sc Csik\'{o}s, B., and Horv\'{a}th, M.}
\newblock Harmonic manifolds and tubes.
\newblock {\em J. Geom. Anal. 28}, 4 (2018), 3458--3476.

\bibitem{Gheysens_Vanhecke}
{\sc Gheysens, L., and Vanhecke, L.}
\newblock Total scalar curvature of tubes about curves.
\newblock {\em Math. Nachr. 103\/} (1981), 177--197.

\bibitem{Gray-Vanhecke}
{\sc Gray, A., and Vanhecke, L.}
\newblock The volumes of tubes about curves in a {R}iemannian manifold.
\newblock {\em Proc. London Math. Soc. (3) 44}, 2 (1982), 215--243.

\bibitem{Groemer}
{\sc Groemer, H.}
\newblock {\em Geometric applications of {F}ourier series and spherical
  harmonics}, vol.~61 of {\em Encyclopedia of Mathematics and its
  Applications}.
\newblock Cambridge University Press, Cambridge, 1996.

\bibitem{Gunther_Prufer}
{\sc G{\"u}nther, P., and Pr{\"u}fer, F.}
\newblock Mean value operators, differential operators and {D}'{A}tri spaces.
\newblock {\em Annals of Global Analysis and Geometry 17}, 2 (1999), 113--127.

\bibitem{Heber}
{\sc Heber, J.}
\newblock On harmonic and asymptotically harmonic homogeneous spaces.
\newblock {\em Geom. Funct. Anal. 16}, 4 (2006), 869--890.

\bibitem{Hotelling}
{\sc Hotelling, H.}
\newblock Tubes and {S}pheres in n-{S}paces, and a {C}lass of {S}tatistical
  {P}roblems.
\newblock {\em Amer. J. Math. 61}, 2 (1939), 440--460.

\bibitem{Kowalski}
{\sc Kowalski, O.}
\newblock Spaces with volume-preserving symmetries and related classes of
  {R}iemannian manifolds.
\newblock No.~Special Issue. 1983, pp.~131--158 (1984).
\newblock Conference on differential geometry on homogeneous spaces (Turin,
  1983).

\bibitem{Pedersen_Tod}
{\sc Pedersen, H., and Tod, P.}
\newblock The {L}edger curvature conditions and {D}'{A}tri geometry.
\newblock {\em Differential Geom. Appl. 11}, 2 (1999), 155--162.

\bibitem{Szabo2}
{\sc Szab{\'o}, Z.~I.}
\newblock Spectral theory for operator families on {R}iemannian manifolds.
\newblock In {\em Differential geometry: {R}iemannian geometry ({L}os
  {A}ngeles, {CA}, 1990)}, vol.~54 of {\em Proc. Sympos. Pure Math.} Amer.
  Math. Soc., Providence, RI, 1993, pp.~615--665.

\end{thebibliography}
\end{document}